\newtheorem{definition}{Definition}
\newtheorem{theorem}{Theorem}
\newtheorem{problem}{Problem}
\newtheorem{conjecture}{Conjecture}
\def\Y{\mathbb Y}
\def\la{\lambda}
\def\Ind{\operatorname{Ind}}
\def\Id{\operatorname{Id}}
\def\si{\sigma}
\def\S{{\mathfrak S}}
\vershik\url{avershik@pdmi.ras.ru}
\natalia\url{natalia@pdmi.ras.ru}
\author{A.~M.~Vershik\thanks{%
St.~Petersburg Department of Steklov Institute of Mathematics and
St.~Petersburg State University, St.~Petersburg, Russia; Institute for Information Transmission Problems, Moscow, Russia.
E-mail: \vershik. Supported by the RFBR grant 17-01-00433.}
\and
N.~V.~Tsilevich\thanks{%
St.~Petersburg Department of Steklov Institute of Mathematics and
St.~Petersburg State University, St.~Petersburg, Russia.
E-mail: \natalia. Supported by the RFBR grant 17-01-00433.}
\and
}
\title{Groups generated by involutions of
diamond-shaped graphs, and deformations of Young's orthogonal form}
\date{October 21, 2019}
\begin{document}

\maketitle

\begin{abstract}
With an arbitrary finite graph having a special form of 2-intervals (a
diamond-shaped graph) we associate a subgroup of a symmetric group and
a representation of this subgroup; state a series of problems on such
groups and their representations; and present results of some computer
simulations. The case we are most interested in is that of the Young
graph and subgroups generated by natural involutions of Young
tableaux. In particular, the classical Young's orthogonal form can be
regarded as a deformation of our construction. We also state
asymptotic problems for infinite groups.

\end{abstract}

\section{The main construction}

\subsection{Combinatorial involutions on diamond-shaped graphs and the group of permutations of paths}
We begin with a finite directed graded graph\footnote{For the general theory of graded graphs and Bratteli diagrams, see, e.g.,~\cite{Itogi} or~\cite{Kerov}.} with one minimal and one maximal element, and assume that it also satisfies the following property.

\begin{definition}
A graded graph is said to be \emph{diamond-shaped} if every its nonempty
$2$\nobreakdash-interval\footnote{By a $2$-interval $[v,w]$ of a graded graph~$\Gamma$ we mean a subgraph of~$\Gamma$ consisting of two vertices $v$~and~$w$ of levels $k$~and~$k+2$, respectively (for some~$k$), and all the vertices of level~$k+1$ connected with both these vertices.}
contains either one or two vertices of the intermediate level, i.e., is either a chain or a rhombus.
\end{definition}

Diamond-shaped graphs appear in the following situation. Consider a finite partially ordered set~$P$ with minimal element~$\varnothing$ and the partially ordered set~$J(P)$ of its  ideals (subsets that contain with every element all smaller elements). This partially ordered set of ideals is a distributive lattice, and its Hasse diagram is a diamond-shaped graph. By a well-known theorem, the converse is also true: every finite distributive lattice is the lattice of  ideals of a finite partially ordered set (see, e.g., \cite[Chap.~3]{Stanley1} or~\cite{Birk}). The property of being diamond-shaped is a weakening of distributivity; the latter imposes conditions on all intervals (and not only $2$-intervals).

Consider an arbitrary ${\mathbb Z}_+$-graded finite diamond-shaped graph~$\Gamma$; denote by~$\Gamma_k$ the set of its vertices of level~$k$.
Let $T(\Gamma)$ be the set of maximal paths of~$\Gamma$, i.e., paths connecting the minimal vertex (of level~$0$) with the maximal vertex (of level~$n$). Denote by~${\mathfrak S}_{\Gamma}$ the group of all permutations of the set~$T(\Gamma)$ (clearly, it is isomorphic to the symmetric group~${\mathfrak S}_N$ where $N$~is the total number of maximal paths in the graph).

\begin{definition}\label{def:combinv}
For  $i=1,2, \dots, n-1$, the \emph{combinatorial involution} $\sigma_i$
is the involution $\sigma_i\in{\mathfrak S}_{\Gamma}$ that acts as follows. Let $t=(t_0,t_1, \dots, t_n)\in T(\Gamma)$ where $t_k\in\Gamma_k$. The involution~$\sigma_i$ leaves all vertices of the path~$t$ except~$t_{i+1}$ unchanged. Consider the
$2$\nobreakdash-interval $[t_i,t_{i+2}]$ in~$\Gamma$.  If it is a chain, then $\sigma_i(t)=t$. If it is a rhombus with intermediate vertices~$t_{i+1}, t'_{i+1}$, then
$$
\sigma_i(t)=(t_0,\dots,t_i,t'_{i+1},t_{i+2}, \dots, t_n).
$$
\end{definition}

By our assumptions on the graph, the action of~$\sigma_i$ is well defined on all paths of the graph.

\begin{definition}
The \emph{group of permutations of paths} of the graph~$\Gamma$ is the group~$G_{\Gamma}=\langle\sigma_1,\dots,\sigma_{n-1}\rangle$ generated by the $n-1$~combinatorial involutions $\sigma_1, \dots, \sigma_{n-1}$
of~$\Gamma$.
 \end{definition}

Consider the $\Bbb R$-vector space~$V(\Gamma)$ of formal linear combinations of maximal paths of~$\Gamma$. Above we have defined not only the group~$G_{\Gamma}$, but also a~representation of this group in~$V(\Gamma)$. The action of every involution~$\sigma_i$ is not identical only \emph{in two-dimensional spaces corresponding to the pair of paths of the same rhombus at the levels~$i$, $i+1$,~$i+2$}.

The obvious relations satisfied by the involutions~$\sigma_i$, $i=1,2,\dots, n-1$, are as follows:
$$\sigma_i^2=\operatorname{id}, \qquad \sigma_i\sigma_j=\sigma_j\sigma_i  \quad\mbox{for}\quad |i-j|\geq 2,
$$
where $\operatorname{id}$ is the identity transformation.

The relations between $\sigma_i$ and $\sigma_{i+1}$ depend on the graph in a complicated manner, and it is difficult to describe them in the general case. Thus, $G_\Gamma$~is defined as a permutation group (a subgroup of a symmetric group) by its generators. (For the general theory of permutation groups, see, e.g.,~\cite{DM}.)

Our main question is how does this group look like for various graphs, and whether it is possible to obtain a classification of such groups.

\begin{problem}
To what extent does the group $G_{\Gamma}$ characterize the original graph~$\Gamma$? How can one characterize the class of graphs corresponding to isomorphic groups~$G_{\Gamma}$?
\end{problem}

Consider the first simple example. Hereafter, by~${\mathfrak S}_n$ we denote the symmetric group of degree~$n$.

\begin{theorem}Let $\Gamma$ be the Hasse diagram of the finite Boolean algebra~$B_n$ with $n$~atoms.
Then $G_{\Gamma}\simeq{\mathfrak S}_n$.
\end{theorem}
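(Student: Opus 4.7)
The plan is to identify the set of maximal paths in the Hasse diagram of $B_n$ with the symmetric group $\mathfrak{S}_n$ itself, and then to show that under this identification the combinatorial involutions act as right multiplications by adjacent transpositions. A maximal path is a chain
$\varnothing = S_0 \subset S_1 \subset \dots \subset S_n = \{1,\dots,n\}$
with $|S_k|=k$, and such a chain is determined by the sequence $\pi(k)$ defined by $\{\pi(k)\} = S_k\setminus S_{k-1}$. This gives a canonical bijection $T(\Gamma)\to\mathfrak{S}_n$; in particular $|T(\Gamma)|=n!$.

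Next I would observe that every nonempty $2$-interval in $B_n$ is a rhombus (never a chain): if $S\subset T$ with $|T\setminus S|=2$, say $T=S\cup\{a,b\}$ with $a\ne b$, then the two intermediate vertices are precisely $S\cup\{a\}$ and $S\cup\{b\}$. Consequently $\sigma_i$ is never trivial on any path, and its effect on the associated permutation is exactly to exchange the two consecutive values $\pi(i),\pi(i+1)$ (the elements added at the two steps straddling the rhombus), keeping all other values in place.

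Exchanging the values at two adjacent positions of $\pi$ is the same as right-multiplying $\pi$ by the adjacent transposition $s_i=(i,i+1)\in\mathfrak{S}_n$: indeed $(\pi s_i)(j)=\pi(s_i(j))$ gives $\pi(i+1)$ in position $i$ and $\pi(i)$ in position $i+1$. Hence, under the bijection $T(\Gamma)\cong\mathfrak{S}_n$, the generator $\sigma_i$ is identified with right multiplication by $s_i$, and therefore $G_\Gamma$ is the image of the subgroup $\langle s_1,\dots,s_{n-1}\rangle\subseteq\mathfrak{S}_n$ under the right regular representation of $\mathfrak{S}_n$ on itself.

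It remains only to quote two standard facts: the adjacent transpositions $s_1,\dots,s_{n-1}$ generate $\mathfrak{S}_n$, and the right regular representation of any group is faithful. Combining these yields $G_\Gamma\simeq\langle s_1,\dots,s_{n-1}\rangle=\mathfrak{S}_n$. The argument has no real obstacle; the only point requiring care is the indexing convention, and the key structural input is that the Hasse diagram of $B_n$ is not merely diamond-shaped but has \emph{all} $2$-intervals rhombi, which is what forces the involutions to satisfy precisely the Coxeter relations of $\mathfrak{S}_n$ rather than some degenerate quotient.
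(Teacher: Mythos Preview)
Your proof is correct and follows essentially the same route as the paper's: both identify maximal chains in $B_n$ with permutations and recognize the resulting action of $G_\Gamma$ as the (right) regular representation of $\mathfrak{S}_n$. Your version is more explicit---directly identifying each $\sigma_i$ with right multiplication by $s_i$, from which the Coxeter relations and faithfulness follow at once---whereas the paper verifies the braid relations and then notes the regular representation, but the underlying idea is the same.
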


\begin{proof}
It is sufficient (and easy) to verify that in this case we have the relations $\sigma_i\sigma_{i+1}\sigma_i=\sigma_{i+1}\sigma_i\sigma_{i+1}$, $i=1,2, \dots, {n-1}$, which, together with the above ones, define the group~${\mathfrak S}_n$. Note that the paths in~$\Gamma$ can be indexed in a natural way by the permutations of the numbers $1,\dots,n$, and the representation of the group~$G_{\Gamma}$ in the space~$V(\Gamma)$ is the regular representation of the symmetric group~${\mathfrak S}_n$.
\end{proof}

The next result is less obvious. By definition, the $d$-dimensional Pascal graph~$P^d$, where $d\ge2$, is the ${\mathbb Z}_+$-graded graph whose $n$th level consists of all $d$-tuples $(k_1,\ldots,k_d)\in{\mathbb Z}_+^d$ such that $k_1+\ldots+k_d=n$, and an edge connects two vertices of neighboring levels that are obtained from each other by changing one coordinate by~$1$. In particular,  $P:=P^2$ is the ordinary Pascal graph (infinite Pascal triangle), see, e.g.,~ \cite{VPascal}.
For an arbitrary vertex $v\in P^d$, denote by~$P^d(v)$ the finite subgraph in~$P^d$ induced by the set of vertices of all paths leading from the minimal vertex $\varnothing=(0,\dots,0)$ to~$v$. We say that $P^d(v)$~is an interval of length~$n$ of the Pascal graph if $v$~is a vertex of level~$n$.

\begin{theorem}\label{th:pascal}
If $\Gamma$  is an arbitrary interval of length~$n$ of the $d$-dimensional Pascal graph, then
${G_{\Gamma}={\mathfrak S}_{n}}$.
\end{theorem}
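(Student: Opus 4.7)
The plan is to identify each maximal path $t = (t_0, \ldots, t_n)$ of $\Gamma = P^d(v)$, where $v = (k_1, \ldots, k_d)$ lies at level $n$, with the word $w(t) = w_1 \cdots w_n \in \{1, \ldots, d\}^n$ whose letter $w_i$ records which of the unit vectors $e_1, \ldots, e_d$ equals the increment $t_i - t_{i-1}$. This yields a bijection between $T(\Gamma)$ and the set $W_v$ of words of composition $(k_1, \ldots, k_d)$. Under this bijection, the 2-interval $[t_i, t_{i+2}]$ is a rhombus precisely when $w_i \neq w_{i+1}$, in which case $\sigma_i$ swaps $w_i$ and $w_{i+1}$; when $w_i = w_{i+1}$ the 2-interval is a chain and $\sigma_i$ acts as the identity, which is consistent with swapping two equal letters. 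Thus $\sigma_i$ is the restriction to $W_v$ of the adjacent transposition $s_i = (i,i+1)$ in the natural position-permutation action of $\mathfrak{S}_n$ on $\{1, \ldots, d\}^n$, and the map $s_i \mapsto \sigma_i$ extends to a surjective homomorphism $\varphi \colon \mathfrak{S}_n \twoheadrightarrow G_\Gamma$.

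Assuming $v$ has at least two nonzero coordinates (the case of a chain being degenerate, as discussed below), it remains to show that $\varphi$ is injective. Suppose $\pi \in \mathfrak{S}_n$ acts trivially on $W_v$, i.e., $w_{\pi(j)} = w_j$ for every $w \in W_v$ and every $j$. If $\pi \neq \operatorname{id}$, pick $j$ with $\pi(j) \neq j$ and two letters $a \neq b$ with $k_a, k_b \geq 1$; then the word obtained by placing $a$ at position $j$, $b$ at position $\pi(j)$, and the remaining letters at the other positions in accordance with the composition violates $w_j = w_{\pi(j)}$. Hence $\pi = \operatorname{id}$, and $\varphi$ is an isomorphism.

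I do not anticipate any serious obstacle. The word encoding makes the braid relations $\sigma_i \sigma_{i+1} \sigma_i = \sigma_{i+1} \sigma_i \sigma_{i+1}$ automatic, since they are inherited from the corresponding identities for adjacent transpositions in $\mathfrak{S}_n$ acting on $\{1, \ldots, d\}^n$; no case-by-case verification is needed. The only delicate point is the degenerate situation in which $v$ has a single nonzero coordinate: then $\Gamma$ is a single maximal chain, $W_v$ is a singleton, and $G_\Gamma$ is trivial, so the statement should be read modulo this exclusion (or equivalently, under the implicit assumption that $\Gamma$ contains at least one rhombus).
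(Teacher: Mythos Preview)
Your proof is correct and follows essentially the same route as the paper: both encode maximal paths in $P^d(v)$ as words over $\{1,\ldots,d\}$ of composition $v$ and observe that each $\sigma_i$ acts as the adjacent transposition $s_i$, so that $G_\Gamma$ is the image of $\mathfrak{S}_n$ under its natural permutation action on such words (the paper phrases this as the standard realization of $\Ind_{\mathfrak{S}_{m_1}\times\cdots\times\mathfrak{S}_{m_d}}^{\mathfrak{S}_n}\Id$). You are in fact slightly more careful than the paper, giving an explicit faithfulness check and flagging the degenerate case where $v$ has only one nonzero coordinate (so $G_\Gamma$ is trivial), a boundary case the paper's statement tacitly excludes.
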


\begin{proof}
Let $\Gamma=P^d(v)$ where $v\in P^d_n$ is a vertex of level~$n$. Let $v=(m_1,\dots,m_d)$. Then the vertices of the graph~$P^d(v)$ can be indexed in an obvious way by all sequences of the form $(a_1,\dots,a_n)$ where $a_i\in[d]=\{1,\dots,d\}$ and $\#\{i:a_i=k\}=m_k$ for every $k=1,\dots,d$. It is easy to see that, under this parametrization, the combinatorial involution~$\sigma_i$, $i=1,\dots,n-1$, acts  as the transposition that swaps
 $a_i$~and~$a_{i+1}$. But this action coincides with the action of the Coxeter generators $s_1,\dots,s_{n-1}$ of the symmetric group~${\mathfrak S}_n$ in the standard realization of the induced representation
$\Ind_{{\mathfrak S}_{m_1}\times\dots\times{\mathfrak S}_{m_d}}^{{\mathfrak S}_n}\Id$, where $\Id=\pi_{(n)}$~is the identity representation of~${\mathfrak S}_n$. This implies the desired result.
\end{proof}

Note that we have described, in particular, the representation of the group~$G_\Gamma$ arising in this case; it is the representation $\Ind_{{\mathfrak S}_{m_1}\times\dots\times{\mathfrak S}_{m_d}}^{{\mathfrak S}_n}\Id$ of the symmetric group induced from an appropriate Young subgroup.

In Sec.~\ref{sec:young}, we will consider groups of permutations of paths on the Young graph. As we will see, in this case $G_{\Gamma}$~is not always isomorphic to a~symmetric group, and even in the cases when it is isomorphic to a symmetric group, this group, in contrast to the previous examples, is defined as being generated not by the traditional transpositions, but by involutions of general form. Apparently, in the general case the groups of the type~$G_{\Gamma}$ constitute a quite special class of groups.

\subsection{Problems about infinite groups}
Obviously, the group corresponding to an interval of a graph is a natural subgroup of the group corresponding to the graph itself. Hence, we can consider an infinite path in an infinite graded graph (Bratteli diagram), for instance, in the Young graph, and the inductive limit of the groups of permutations of paths corresponding to initial segments of this path. A number of natural questions arise.

\begin{problem}
How does the resulting infinite group look like? For what pairs of paths are the corresponding groups  naturally isomorphic?
\end{problem}

Even for the Young graph~$\Y$ (see Sec.~\ref{sec:young}), this results in a series of especially intriguing problems. Consider an indecomposable character of the infinite symmetric group~${\frak S}_{\infty}$ and the corresponding representation (see, e.g., \cite{VK, Kerov}). For almost every, with respect to the corresponding measure, path $T=(\la_0,\la_1,\dots)$, where $\la_k\in\Y_k$,  consider the group~$G_T$ which is the inductive limit of the groups~$G_{\Y_{\la_n}}$ corresponding to finite initial segments of the path. Clearly, finitely equivalent paths lead to the same group.

\begin{problem}Are all these infinite groups~$G_T$ isomorphic for almost all paths? If no, what is the partition of the set of paths into isomorphism classes of the corresponding groups? If yes, is this unique (up to isomorphism) group isomorphic to the infinite symmetric group?
\end{problem}

\section{Groups generated by combinatorial involutions on the Young graph}\label{sec:young}

In this section, we describe the above setting  in detail for the Young graph. Recall that by~${\mathfrak S}_n$ we denote the symmetric group of degree~$n$. By $s_i\in{\mathfrak S}_n$ we denote the Coxeter generator~$s_i=(i,i\!+\!1)$, ${i=\!1,\ldots,n-1}$.

Let $\Y$ be the Young graph; the $n$th level of~$\Y$ is the set~$\Y_n$  of Young diagrams with $n$~cells, and an edge connects two vertices of neighboring levels such that the larger one is obtained from the smaller one by adding one cell. Given a diagram $\la\in\Y_n$, denote by $\Y_\la$ the finite subgraph in~$\Y$ induced by the set of vertices of all paths leading from the minimal vertex (empty diagram) to~$\lambda$. Then the set of paths in the graph~$\Y_\la$ can be identified with the set of standard Young tableaux of shape~$\la$,  and the linear space $V_\la:=V(\Y_\la)$ spanned by~$T_\la$ is the space of the irreducible representation~$\pi_\la$ of the group~${\mathfrak S}_n$ corresponding to the diagram~$\la$.

Recall that the action of the Coxeter generators~$\si_i$ in the representation~$\pi_\la$ in the Gelfand--Tsetlin basis indexed by the Young tableaux of shape~$\la$ is given by Young's orthogonal form (see Sec.~\ref{sec:orth}). But, according to Definition~\ref{def:combinv}, we consider
the following {\it permutation} action of the Coxeter generators $\sigma_1,\dots,\sigma_{n-1}$:
$$
\rho_\la(\si_i)t=\begin{cases}
t_i' & \text{if $i$ and $i+1$ lie in different columns and different rows of~$t$},\\
t& \text{otherwise},
\end{cases}
$$
where $t_i'$ is the standard tableau obtained from~$t$ by swapping $i$~and~$i+1$. Observe that $\sigma_1$ is always the identity transformation, since $1$ and $2$ always lie either in one column, or in one row.

Consider the group $G_\la=\langle\sigma_2,\dots,\sigma_{n-1}\rangle$ of permutations of the graph~$\Y_\la$ generated by the involutions~$\sigma_i$, $i=2,\ldots,n-1$. Our goal is to study the groups~$G_\la$. Obviously, $G_\la$~is a subgroup of the total group of permutations of the set~$T_\la$ of paths in~$\Y$ leading to the vertex~$\la$, i.e., of the symmetric group~${\mathfrak S}_{\dim\la}$, where $\dim\la$ is the dimension of the diagram~$\la$, i.e., the number of such paths.

\subsection{Exact results}

\begin{theorem}\label{th:hooks}
If $\la=(n-k,1^k)$ is a hook diagram, then $G_\la$ is isomorphic to the symmetric group~${\mathfrak S}_{n-1}$.
\end{theorem}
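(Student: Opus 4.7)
The plan is to find an explicit parametrization of the standard Young tableaux of shape $\lambda=(n-k,1^k)$ under which the combinatorial involutions $\sigma_i$ become the adjacent transpositions in a familiar permutation action of $\mathfrak{S}_{n-1}$. Any standard tableau of hook shape is uniquely determined by the set of numbers in its leg (the entries of the first column below the top-left corner, which must always be $1$). So I would identify $T_\lambda$ with the collection of $k$-element subsets $S\subseteq\{2,3,\dots,n\}$, where $S$ records the content of the leg and $\{2,\dots,n\}\setminus S$ fills the arm in increasing order.

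Next I would translate the action of $\sigma_i$, $i\geq2$, into this parametrization. In a hook tableau, $i$ and $i+1$ lie in the same row iff both are in the arm (i.e.\ $i,i+1\notin S$), and in the same column iff both are in the leg (i.e.\ $i,i+1\in S$). Consequently $\sigma_i$ fixes $S$ when $i$ and $i+1$ have the same status with respect to $S$, and otherwise toggles their memberships. But this is precisely the action of the transposition $(i,i+1)$ in the natural permutation action of $\mathrm{Sym}(\{2,\dots,n\})\cong\mathfrak{S}_{n-1}$ on $k$-subsets of $\{2,\dots,n\}$. Since the transpositions $(i,i+1)$ for $i=2,\dots,n-1$ generate $\mathrm{Sym}(\{2,\dots,n\})$, this gives a surjective homomorphism $\mathfrak{S}_{n-1}\twoheadrightarrow G_\lambda$.

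It remains to show the homomorphism is injective, i.e.\ that the permutation action of $\mathfrak{S}_{n-1}$ on the $k$-subsets is faithful (in the nondegenerate range $1\leq k\leq n-2$, the only range in which the statement is nontrivial). Given any nonidentity $\pi\in\mathfrak{S}_{n-1}$, pick $i\in\{2,\dots,n\}$ with $\pi(i)\neq i$; I can then choose a $k$-subset containing $i$ but not $\pi(i)$, taking the remaining $k-1$ elements from $\{2,\dots,n\}\setminus\{i,\pi(i)\}$, which has $n-3\geq k-1$ elements. This subset is not fixed by $\pi$, so $\pi$ acts nontrivially, and the kernel is trivial.

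There is no real obstacle here: the heart of the argument is simply recognizing the combinatorial involutions on hook tableaux as the Coxeter generators of $\mathfrak{S}_{n-1}$ acting on $k$-subsets. The mildest subtlety is noticing that $\sigma_1$ is dropped precisely because $1$ and $2$ are forced to share either the first row or the first column of a hook; this is why the resulting group is $\mathfrak{S}_{n-1}$ rather than $\mathfrak{S}_n$.
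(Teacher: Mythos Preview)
Your proof is correct and is essentially the paper's argument unpacked: the paper observes that for a hook $\la=(n-k,1^k)$ the graph~$\Y_\la$ is isomorphic to the Pascal interval $P(n-k-1,k)$ and then invokes Theorem~\ref{th:pascal}, whose proof is exactly your identification of paths with binary sequences (equivalently, $k$-subsets of $\{2,\dots,n\}$) on which the $\sigma_i$ act as adjacent transpositions. Your explicit faithfulness check for $1\le k\le n-2$ is a detail the paper leaves implicit.
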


\begin{proof}
It is easy to see that for a hook diagram ${\lambda=(n-k,1^k)}$, the graph~$\Y_\lambda$ is isomorphic to the finite interval~$P(v)$ of the Pascal graph where $v={(n-k-1,k)}$, hence the result follows from Theorem~\ref{th:pascal}.
\end{proof}

In particular, as follows from Theorem~\ref{th:pascal}, in this case we have the representation
$\Ind_{{\mathfrak S}_k\times{\mathfrak S}_{n-k-1}}^{{\mathfrak S}_{n-1}}\Id$ of the symmetric group $G_\la\simeq{\mathfrak S}_{n-1}$ in the space~$V_\la$.

\begin{theorem}
If $\la=(n-2,2)$ for $n\ge4$, then $G_\la$ is isomorphic to the symmetric group~${\mathfrak S}_{\dim\la}$.
\end{theorem}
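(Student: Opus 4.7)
The plan is induction on $n$, with base case $n=4$ handled by direct verification: the two standard tableaux of shape $(2,2)$ are swapped by $\sigma_2$ (and $\sigma_3$ acts trivially), so $G_\la\simeq\mathfrak{S}_2$. For $n\ge 5$, I would partition $T_\la=A\sqcup B$ according to whether the cell labelled $n$ lies in the first row (set $A$, in bijection with SYT of shape $(n-3,2)$, so $|A|=\dim(n-3,2)$) or in the second row (set $B$, in bijection with SYT of the hook $(n-2,1)$, so $|B|=n-2$). The subgroup $H=\langle\sigma_2,\dots,\sigma_{n-2}\rangle$ only permutes entries $\le n-1$ and thus preserves the partition; by the inductive hypothesis $H|_A\simeq\mathfrak{S}_{|A|}$, and by Theorem~\ref{th:hooks} applied to $(n-2,1)$ we get $H|_B\simeq\mathfrak{S}_{|B|}$. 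Goursat's lemma applied to $H\le\mathfrak{S}_{|A|}\times\mathfrak{S}_{|B|}$ with surjective projections, together with $|A|\ne|B|$ and the short list of normal subgroups of the symmetric groups in question, forces the only possible common quotient to be trivial or $\mathbb{Z}/2$, whence $H\supseteq A_{|A|}\times A_{|B|}$.

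The bridging generator $\sigma_{n-1}$ can be computed directly: swapping the entries $n-1$ and $n$, it acts on the pairs $(a,b)$ describing second-row entries as the product of the $n-3$ disjoint transpositions $\{t_{(a,n-1)},t_{(a,n)}\}$ for $a=2,\dots,n-2$, each pairing an $A$-element with a $B$-element, and it fixes everything else. In particular $u:=t_{(n-1,n)}\in B$ is always fixed. Hence $G_\la=\langle H,\sigma_{n-1}\rangle$ acts transitively on $T_\la$. The stabiliser of $u$ in $G_\la$ contains $H_u$, which projects onto $\mathfrak{S}_{|A|}$ in the first factor and acts as $\mathfrak{S}_{|B|-1}$ on $B\setminus\{u\}$, and is therefore transitive on each of $A$ and $B\setminus\{u\}$; together with $\sigma_{n-1}$ (which links some element of $A$ to an element of $B\setminus\{u\}$), this makes $G_\la$ act 2-transitively, hence primitively, on $T_\la$.

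Because $\{1\}\times A_{|B|}\subseteq A_{|A|}\times A_{|B|}\subseteq H$ and $A_{|B|}=A_{n-2}$ contains a $3$-cycle for $n\ge 5$, the group $G_\la$ is a primitive permutation group containing a $3$-cycle, so Jordan's classical theorem yields $G_\la\supseteq A_{\dim\la}$. Finally, a direct parity count gives that $\sigma_i$ is a product of $n-3$ transpositions for $i\ne 3$ and of $n-4$ transpositions for $i=3$, so at least one generator is always odd (namely $\sigma_3$ if $n$ is odd, $\sigma_2$ otherwise), and therefore $G_\la=\mathfrak{S}_{\dim\la}$. I expect the main technical obstacle to be the Goursat step, and most delicately the case $n=6$, where $|B|=4$ and the extra normal subgroup $V_4\trianglelefteq\mathfrak{S}_4$ has to be ruled out as a possible common quotient with $\mathfrak{S}_5$.
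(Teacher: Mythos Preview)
Your argument is correct and follows a genuinely different route from the paper's. The paper partitions $T_\la$ according to the position of the entry~$2$ (not~$n$), obtaining pieces $T_1,T_2$ of sizes $\binom{n-2}{2}$ and $n-3$; it then identifies the action of $\langle\si_3,\dots,\si_{n-1}\rangle$ on each piece directly with an induced representation of a symmetric group (no induction on~$n$), shows by an explicit cycle computation that this subgroup is \emph{exactly} $\S_{n-2}\times\S_{n-3}$, and finishes by exhibiting a concrete $(\dim\la)$-cycle together with a transposition. Your version is more structural: induction on~$n$, Goursat's lemma to get $H\supseteq A_{|A|}\times A_{|B|}$, then $2$-transitivity plus Jordan's theorem and a parity check. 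Your machinery (Goursat, Jordan) is heavier but the argument generalises more readily; the paper's construction is entirely elementary and self-contained but more ad hoc.

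Two small points. First, the $n=6$ worry is a non-issue: the extra quotient $\S_4/V_4\simeq\S_3$ cannot match any quotient of $\S_5$, since the only proper nontrivial normal subgroup of $\S_5$ is $A_5$. Second, your assertion that ``$H_u$ projects onto $\S_{|A|}$'' is not literally justified by what you have proved (a point-stabiliser in a subdirect product need not project surjectively); what you actually need, and what follows from $H\supseteq A_{|A|}\times A_{|B|}$, is $H_u\supseteq A_{|A|}\times A_{|B|-1}$, which already gives transitivity on $A$ and on $B\setminus\{u\}$ once $|A|\ge3$ and $|B|\ge4$, i.e.\ for $n\ge6$. The case $n=5$ (where $|A|=2$, $|B|=3$) should be checked separately; there one sees directly that $H=\S_2\times\S_3$, so $H_u=\S_2\times\S_2$ and the $2$-transitivity step goes through.
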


\begin{proof}
Obviously, the group $G_{(2,2)}$ is isomorphic to $G_{(2,1)}$, so the corresponding result follows from Theorem~\ref{th:hooks}; thus, in what follows we assume that $n\ge5$. Denote $G:=G_\la$.

Let $T_1$ and $T_2$ be the sets of Young tableaux of shape~$\la$ in which the element~$2$ lies in the first and second row, respectively. Obviously, $T=T_1\cup T_2$ and $T_1,T_2$ are invariant under $\si_i$ for $i\ge 3$. For $k=1,2$, denote by $\si_i^{(k)}$ the restriction of $\si_i$ to $T_k$, and let $G_k=\langle\si_3^{(k)},\ldots,\si_{n-1}^{(k)}\rangle$.
It is easy to see that the action of the operators $\si_3^{(1)},\ldots,\si_{n-1}^{(1)}$  coincides with the action of the Coxeter generators $s_1,\ldots,s_{n-3}$ of~$\S_{n-2}$ in the representation
$\Ind_{\S_2\times\S_{n-4}}^{\S_{n-2}}\Id$, so $G_1$ is isomorphic to $\S_{n-2}$. Analogously, the action of the operators $\si_4^{(2)},\ldots,\si_{n-1}^{(2)}$ coincides with the action of the Coxeter generators $s_1,\ldots,s_{n-4}$ of~$\S_{n-3}$ in the representation
$\Ind_{\S_1\times\S_{n-4}}^{\S_{n-3}}\Id$ (while, obviously, $\si_3^{(2)}$ is the identity operator), so $G_2$ is isomorphic to $\S_{n-3}$.

Now, let us show that these two actions are independent, in the sense that the subgroup $G':=\langle \si_3,\ldots,\si_{n-1}\rangle\subset G$  is isomorphic to $\S_{n-2}\times\S_{n-3}$. Let $\tau=\si_3\si_4\ldots\si_{n-1}$. It follows from the observations in the previous paragraph that $\tau_1:=\tau|_{T_1}$ can be identified with the cycle $(1,2,\ldots, n-2)$ in~$\S_{n-2}$, while $\tau_2:=\tau|_{T_2}$ can be identified with a cycle of length~$n-3$ in $\S_{n-3}$. Therefore,  $\tau^{n-3}$ acts as the cycle $(n-2,n-3,\ldots,1)$ on $T_1$ and identically on~$T_2$. Further, $\si_3$ acts as the transposition~$(1,2)$ on~$T_1$ and identically on~$T_2$. But it is well known that the permutations $(n-2,n-3,\ldots,1)$ and $(1,2)$ generate the symmetric group~$\S_{n-2}$. Hence, $G'$ contains $\S_{n-2}\times\{e\}$, which clearly implies the desired claim.

It remains to show that $\langle G',\si_2\rangle$ is isomorphic to $\S_{\dim\la}$. Let $T_{11}$ be the subset of~$T_1$ consisting of the tableaux in which the element $3$ lies in the first row and $T_{12}$ be the subset of~$T_1$ consisting of the tableaux in which the element $3$ lies in the second row. Obviously, there is a natural bijection between $T_{12}$ and $T_2$ which identifies tableaux differing only by the positions of the elements~$2$ and~$3$. Let $T_2=\{t_1,\ldots,t_k\}$ and $T_{12}=\{t_1',\ldots,t_k'\}$, where $t_i$ and $t_i'$ correspond to each other under this bijection, and $T_{11}=\{r_1,\ldots,r_\ell\}$. Then, obviously, $\si_2$ exchanges $t_i$ and $t_i'$ for each~$i$, while leaving each $r_j$ fixed. Now, as we have already proved, there exist a permutation $g_1\in G'$ that acts as the cycle $(t_1,t_2,\ldots,t_k)$ on~$T_2$ and identically on~$T_1$ and a permutation $g_2\in G'$ that acts as the cycle $(r_1,r_2,\ldots,r_\ell)$ on~$T_1$ and identically on~$T_2$. It is easy to see that the permutation $g_1\si_2g_2$ is the cycle
$$
c=(t_1,t_2',t_2,t_3',t_3,\ldots,t_k',t_k,r_1,r_2,\ldots,r_\ell,t_1')
$$
of length~$\dim_\la$. On the other hand, there exists $g_3\in G'$ that acts as the transposition $(r_1,r_2)$ on $T_1$ and identically on~$T_2$. Then $\langle c,g_3\rangle=\S_{\dim\la}$, as required.
\end{proof}

\section{Computational results for $n\le9$}

Using the \textbf{\textit{SageMath}} software system, we have computed the orders of the groups~$G_\la$ for
diagrams~$\la$ with $n\le9$ cells. The results are as follows. 

\begin{itemize}
\item If $\la$ is a hook diagram, then $G_\la$ is isomorphic to the symmetric group~$\S_{n-1}$ (Theorem~\ref{th:hooks}).

\item If $\la$ is one of the diagrams $(4,2^2)$, $(6,3)$, $(4^2,1)$, then the order of~$G_\la$ is equal to $\frac{(\dim\la)!}2$.

\begin{conjecture} In these cases, $G_\la$ is isomorphic to the alternating group~$A_{\dim\la}$.
\end{conjecture}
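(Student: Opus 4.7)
The strategy is to reduce the conjecture to a parity check on the generators. From the computed orders, in each of the three cases $\la\in\{(4,2^2),(6,3),(4^2,1)\}$ the group $G_\la$ is a subgroup of $\S_{\dim\la}$ of index~$2$. The three dimensions are $56$, $48$, and $84$, all at least~$5$; and it is a classical fact that the only index-$2$ subgroup of a symmetric group of degree $\geq 5$ is the alternating group. Hence the conjecture follows as soon as one verifies the single inclusion $G_\la\subseteq A_{\dim\la}$: equality of orders then forces equality of the groups.

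To establish $G_\la\subseteq A_{\dim\la}$ it suffices to show that each generator $\si_i$, $i=2,\dots,n-1$, acts as an even permutation on~$T_\la$. By the Young-graph specialization of Definition~\ref{def:combinv}, $\si_i$ fixes every tableau in which $i$ and $i+1$ lie in the same row or the same column, and swaps the remaining tableaux in pairs. If $r_i(\la)$ denotes the number of $t\in T_\la$ in which $i$ and $i+1$ lie in different rows and columns, then $\si_i$ is a product of $r_i(\la)/2$ disjoint transpositions, so $\operatorname{sgn}(\si_i)=(-1)^{r_i(\la)/2}$; the problem reduces to checking the congruence $r_i(\la)\equiv 0\pmod{4}$ for every admissible~$i$ and every~$\la$ in the list.

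The numbers $r_i(\la)$ are accessible through the branching rule: decomposing a path in~$\Y_\la$ via its intermediate shapes $\mu=\la^{(i-1)}$ and $\nu=\la^{(i+1)}$, the requirement that $i,i+1$ fall in different rows and columns is exactly that the skew shape $\nu/\mu$ be a rhombus, in which case there are two compatible choices of the intermediate shape~$\la^{(i)}$ (corresponding to the two orderings of $i,i+1$ inside the rhombus); hence
$$
r_i(\la)=2\sum_{\substack{\mu\subset\nu\subseteq\la\\ |\mu|=i-1,\,|\nu|=i+1\\ \nu/\mu\text{ is a rhombus}}}f^{\mu}\cdot f^{\la/\nu},
$$
with $f^\mu$ and $f^{\la/\nu}$ counting standard tableaux of straight and skew shape, respectively. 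The desired congruence becomes the parity statement that this sum over rhombi is even, and for these three small diagrams it is a short, level-by-level enumeration.

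The main obstacle I anticipate is purely combinatorial: the divisibility by~$4$ must be verified for roughly six or seven values of~$i$ in each of the three cases, and I do not see an \emph{a priori} reason why it should hold uniformly for exactly these three shapes while failing, presumably, for all the other non-hook, non-$(n-2,2)$ shapes with $n\le 9$. A structural explanation --- for instance, a natural fixed-point-free involution on the ``active'' tableaux at each level that commutes with $\si_i$, thereby pairing the $r_i(\la)/2$ transpositions of $\si_i$ --- would be highly desirable and might clarify the pattern singling out these three diagrams; absent such an insight, one falls back on three independent finite computations.
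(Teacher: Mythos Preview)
The paper does not prove this statement; it is recorded only as a conjecture, supported by the SageMath computation that $|G_\la|=(\dim\la)!/2$ in the three listed cases. There is therefore no paper proof to compare against.

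That said, your opening paragraph already contains a complete proof, and the remainder of the proposal is unnecessary. Once $|G_\la|=(\dim\la)!/2$ is granted, $G_\la$ is an index-$2$ subgroup of $\S_{\dim\la}$; and $A_m$ is the \emph{unique} subgroup of index~$2$ in~$\S_m$ for every $m\ge 2$ (any such subgroup is the kernel of a surjection $\S_m\to\mathbb{Z}/2$, and these factor through the abelianization $\S_m/[\S_m,\S_m]=\S_m/A_m\cong\mathbb{Z}/2$, so the sign map is the only one). Hence $G_\la=A_{\dim\la}$ with no further input. Your dimensions $56$, $48$, $84$ are correct, so the degree bound is amply satisfied.

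The logical slip is in the word ``Hence'': having just cited the uniqueness of the index-$2$ subgroup, you then write that the conjecture follows \emph{as soon as} one checks $G_\la\subseteq A_{\dim\la}$. But that inclusion is not an additional hypothesis to be verified --- it is already a consequence of the uniqueness fact. The level-by-level parity computation of $r_i(\la)\pmod 4$ that occupies the rest of your proposal is therefore redundant; it would provide an independent confirmation that each $\si_i$ is even, but it is not needed for the proof. In short: your argument is correct and in fact settles what the paper leaves as a conjecture, but you have not noticed that you finished in the first paragraph.
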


\item If $\la$ is one of the diagrams $(3,2,1)$, $(4, 2,1^2)$, $(3^2,2)$, $(3^3)$, then the order of~$G_\la$ is equal to $2^{k-1}k!$ where $k=\frac{\dim\la}2$ is half the dimension of~$\la$.

\begin{conjecture} In these cases, $G_\la$ is isomorphic to the Coxeter group~$D_k$.
\end{conjecture}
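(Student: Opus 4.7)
The strategy is to exploit the fact that each of the four diagrams $(3,2,1)$, $(4,2,1^2)$, $(3^2,2)$, $(3^3)$ is \emph{self-conjugate}, so that the transposition map $\iota: t \mapsto t^T$ defines an involution on $T_\la$. This involution is fixed-point-free, because a self-conjugate standard tableau would require every entry to lie on the main diagonal of $\la$, but the Durfee square of each of these shapes has side at most $3$ while $|\la| \ge 6$. Moreover $\iota$ commutes with every $\si_i$, since the ``different row and different column'' condition defining $\si_i$ is symmetric under transposition. Consequently $\iota$ partitions $T_\la$ into $k = \dim\la/2$ pairs, and $G_\la$ acts on the set of pairs through a homomorphism $\phi: G_\la \to \S_k$. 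The plan is to identify $G_\la$ with the subgroup $H_k \le \S_{\dim\la}$ consisting of those pair-preserving permutations that lie in $A_{\dim\la}$; this is the standard $2k$-point realization of $D_k = (\mathbb Z/2)^{k-1} \rtimes \S_k$, of order exactly $2^{k-1}k!$.

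The first task is to show $G_\la \subseteq A_{\dim\la}$. Call $\si_i$ \emph{internal} on a pair $\{t,\iota t\}$ if $\si_i(t) = \iota(t)$ and \emph{external} if it sends the pair to a distinct pair. As a permutation of $T_\la$, each $\si_i$ decomposes into $a_i$ internal transpositions plus $c_i$ external ones; since $\iota$ commutes with $\si_i$, external $\si_i$-orbits come in $\iota$-matched pairs, so $c_i$ is even. Therefore $\operatorname{sgn}(\si_i) = (-1)^{a_i}$. But the equation $\si_i(t) = \iota(t)$ would force every entry $j \ne i, i+1$ of $t$ to lie on the diagonal of $\la$, and $|\la| - 2 \ge 4$ exceeds the Durfee size in each of our four shapes, so no such $t$ exists. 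Hence $a_i = 0$, every $\si_i$ is even, $G_\la \le H_k$, and the kernel $K := \ker\phi$ sits inside the even-weight subgroup $(\mathbb Z/2)^{k-1} \subset (\mathbb Z/2)^k$.

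The main substantive step is to show that $\phi$ is \emph{surjective} onto $\S_k$; this is the principal obstacle. Following the scheme used for $\la=(n-2,2)$ in the previous theorem, I would exhibit inside $\phi(G_\la)$ both a full $k$-cycle and an adjacent transposition of pairs, concluding generation of all of $\S_k$. For each of the four specific shapes this reduces to a finite combinatorial verification, which might be streamlined by labelling pairs $\{t,\iota t\}$ by the common $2$-quotient or domino tableau shared by $t$ and $\iota t$, and tracking how each $\phi(\si_i)$ acts on these labels. Once surjectivity of $\phi$ is established, the computer-verified order $|G_\la| = 2^{k-1}k!$ forces $|K| = 2^{k-1}$, so $K$ is the full even-weight subgroup and $G_\la = H_k \cong D_k$.

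The real difficulty is surjectivity: the sign argument in the first task is uniform and clean, but producing enough pair-transpositions inside $\phi(G_\la)$ requires a genuine combinatorial grip on how the $\iota$-classes sit inside $\Y_\la$. A conceptual reason that would also predict the situation for larger self-conjugate $\la$, such as $(4,3,2,1)$ or $(5,4,3,2,1)$, would be very desirable but is unlikely to emerge from a direct shape-by-shape proof of the present conjecture.
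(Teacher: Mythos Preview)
The paper does not prove this statement: it is explicitly labelled a \emph{Conjecture}, with only a SageMath verification recorded for the single shape $\la=(3,2,1)$. So there is no paper's proof to compare against; what can be assessed is whether your outline actually establishes the conjecture.

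It does, and in fact your argument is already complete \emph{before} what you call ``the main substantive step.'' You correctly show that $\iota$ is a fixed-point-free involution commuting with every $\si_i$, so $G_\la$ lies in the centralizer $C_{\S_{2k}}(\iota)\cong B_k=(\mathbb Z/2)^k\rtimes\S_k$. Your Durfee-square count (for each of the four shapes, $|\la|-2$ exceeds the number of diagonal cells) rules out internal transpositions, so each $\si_i$ is an even permutation and $G_\la\le A_{2k}$. Hence $G_\la\le H_k:=B_k\cap A_{2k}$, and $|H_k|=2^{k-1}k!$. But the paper already records the computed order $|G_\la|=2^{k-1}k!$. Containment plus equal orders gives $G_\la=H_k\cong D_k$ immediately. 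Surjectivity of $\phi$ is then a \emph{consequence}, not a hypothesis you still need; the ``principal obstacle'' you flag is illusory here.

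Where surjectivity genuinely becomes the issue is in the paper's next, broader Conjecture~3 (all non-hook self-conjugate $\la$), since there no order computation is available and one must produce the whole of $H_k$ from the $\si_i$ directly. Your parity argument still gives the containment $G_\la\le H_k$ whenever $|\la|-2$ exceeds the Durfee side, which holds for every non-hook self-conjugate shape; it is only the reverse inclusion that would require the kind of pair-transposition construction you sketch.
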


{\bf Remark.} The conjecture is verified using \textbf{\textit{SageMath}} for the diagram $\la=(3,2,1)$.

\begin{conjecture} If $\la$ is a symmetric diagram that is not a hook, then $G_\la$ is isomorphic to the Coxeter group~$D_k$ where $k=\frac{\dim\la}2$.
\end{conjecture}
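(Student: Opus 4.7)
The plan is to realize $G_\la$ as the index-two ``even'' subgroup $D_k$ inside the hyperoctahedral group $B_k$ arising from the transpose involution on tableaux. Since $\la=\la'$, the map $\tau(t)=t^{T}$ sends $T_\la$ to itself, and it is fixed-point free because any fixed tableau would satisfy $t(i,j)=t(j,i)$ at every off-diagonal cell, contradicting the distinctness of entries. Thus $\dim\la=2k$ and $\tau$ partitions $T_\la$ into $k$ unordered pairs. Transposition commutes with the operation of swapping the labels $i$~and~$i+1$, so $\tau\si_i=\si_i\tau$ for every~$i$, and hence $G_\la$ sits inside the centralizer $C_{\S_{\dim\la}}(\tau)\cong B_k=(\mathbb{Z}/2)^k\rtimes\S_k$.

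I would next show $G_\la\subseteq D_k$, where $D_k\subset B_k$ is the kernel of the product-of-signs character. The sign of $\si_i$ in $B_k$ equals $(-1)^{|F_i|}$, where $F_i$ is the set of pairs $\{t,\tau t\}$ swapped internally by~$\si_i$, i.e., with $\si_i(t)=t^{T}$. Writing this equation out position by position, one sees that every label other than $i,i+1$ must sit on the main diagonal of~$\la$, and the positions of $i$ and $i+1$ in~$t$ must be mirror images of each other. This forces $\la$ to have exactly one off-diagonal pair of cells, which a short case analysis shows occurs only for $\la=(2,2)$. Since this diagram is excluded, $F_i=\varnothing$ for every~$i$ and $G_\la\subseteq D_k$.

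The main obstacle is the reverse inclusion $G_\la\supseteq D_k$. The natural approach is induction on $n=|\la|$ via the subgroup $H=\langle\si_2,\dots,\si_{n-2}\rangle\subseteq G_\la$ stabilizing the position of~$n$: its orbits on~$T_\la$ are indexed by the corners~$c$ of~$\la$, and on the orbit $T_\la^{(c)}$ of tableaux with $n$ at~$c$ the action of~$H$ is equivalent to that of $G_{\la\setminus c}$ on~$T_{\la\setminus c}$. Since $\la=\la'$, the corners pair up under transposition, with at most one diagonal corner~$c_0$; the involution~$\tau$ exchanges $T_\la^{(c)}$ and $T_\la^{(c^{T})}$, while inside $T_\la^{(c_0)}$ it acts as the internal transpose of the (again self-conjugate) subdiagram $\la\setminus c_0$. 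By induction, $H$ contains the corresponding direct product of the (known) subgroups $G_{\la\setminus c}$, and adjoining $\si_{n-1}$, which links the corner blocks, should yield both the full $\S_k$ quotient of $D_k$ and its kernel $(\mathbb{Z}/2)^{k-1}$.

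The delicate point is the generation of this sign-change kernel: the inductive factors attached to a symmetric pair $(c,c^{T})$ of non-diagonal corners act diagonally on the two mirror orbits and thus contribute no sign changes at all, so all nontrivial sign changes must come from the diagonal-corner block (when present) or from commutators involving $\si_{n-1}$. Additional computer verification, for instance for $\la=(4,2,1,1)$ and $\la=(3,3,2)$, would help calibrate the induction and pin down this final step.
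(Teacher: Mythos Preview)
The statement you are attempting to prove is stated in the paper as a \emph{conjecture}, not a theorem; the paper offers no proof, only computational evidence for $n\le 9$. So there is no paper proof to compare against, and your proposal should be read as an attempt to settle an open conjecture.

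Your upper bound $G_\la\subseteq D_k$ is correct and is a genuine contribution. The transpose involution $\tau$ is indeed fixed-point free on $T_\la$, commutes with every $\si_i$, and your parity computation showing that each $\si_i$ lies in the even-sign subgroup is right: the condition $\si_i(t)=t^{T}$ forces all labels other than $i,i+1$ onto the diagonal, and a short count ($n-2\le d$, $d^2\le n$) leaves only $\la=(2,2)$. One small slip: you write ``since this diagram is excluded,'' but $(2,2)$ is symmetric and \emph{not} a hook, so it is \emph{not} excluded by the hypothesis of the conjecture. In fact $G_{(2,2)}\cong\S_2$ while $D_1$ is (under the usual convention) trivial, so the conjecture already fails at $(2,2)$ unless one adopts a convention with $D_1\cong\mathbb Z/2$ or adds an exclusion. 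This is a genuine, if minor, issue with the conjecture as stated.

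The reverse inclusion $G_\la\supseteq D_k$ is the real content, and here your proposal is not a proof but a plan with acknowledged gaps. Two specific obstacles deserve emphasis. First, your induction removes a corner $c$ of $\la$, but for an off-diagonal corner the diagram $\la\setminus c$ is \emph{not} symmetric, so the inductive hypothesis does not apply; you would need to know $G_{\la\setminus c}$ for arbitrary non-hook diagrams, which by the paper's data is usually $\S_{\dim}$ or $A_{\dim}$ but is itself unproved. Second, even granting the individual blocks, you need $H=\langle\si_2,\dots,\si_{n-2}\rangle$ to contain their full \emph{direct product}, not merely each factor; this independence step (analogous to the $\S_{n-2}\times\S_{n-3}$ argument in the paper's proof for $(n-2,2)$) is nontrivial and is not addressed. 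Finally, as you yourself note, generating the sign-change kernel $(\mathbb Z/2)^{k-1}$ from $\si_{n-1}$ and the diagonal-corner block is delicate and is left entirely open. In its current form the proposal proves the inclusion $G_\la\subseteq D_k$ (modulo the $(2,2)$ caveat) but does not establish the conjecture.
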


\item In all the other cases, the order of $G_\la$ is equal to~$(\dim\la)!$, that is, $G_\la$ is isomorphic to the total symmetric group~$\S_{\dim\la}$.
\end{itemize}

These examples indicate that the groups~$G_{\lambda}$  for Young diagrams either are Coxeter groups, or differ little from them. For other graphs, no experiments have been done, it is not even clear in which cases these groups are, for example, $2$-transitive. This question is especially interesting for groups corresponding to distributive lattices, in particular, for skew Young diagrams.

\section{Young's orthogonal form as a deformation of a representation of~$G_{\Y_\la}$}\label{sec:orth}

Let us extend our definition of groups acting in the spaces~$V_\la$ to include the classical Young's construction that realizes irreducible representations of the symmetric group in the same space~$V_\la$, but in which transpositions act not as permutations of paths, but as two-dimensional rotations with reflection. To simplify matters, we state Young's formulas in $\mathbb C$-form.

We still consider the space~$V_\la=V(\Y_{\lambda})$, where $\lambda$~is a Young diagram with $n$~cells and $\Y_{\lambda}$~is the corresponding subgraph of the Young graph. The corresponding irreducible representation~$\pi_{\lambda}$ of the group~${\mathfrak S}_n$ acts in the space~$V_\la$, and Young found formulas for the action of the Coxeter transpositions  (see, e.g., a modern exposition in~\cite{OV} or a classical one in~\cite{JK}). Namely, in each two-dimensional space corresponding to a diamond-shaped $2$-interval, we introduce a structure of the complex line~$\Bbb C$. Then the combinatorial involution introduced above turns into the transformation
 $z\mapsto i\bar z$ (which swaps the real and imaginary axes). But Young's involution corresponding to the Coxeter transposition~$\sigma_k$,  $k=1,2,\dots, n-1$, acts in each one-dimensional complex subspace of this form as
 $$z\mapsto e^{i\alpha_{\lambda}(k)}\bar z,$$
where $\alpha_{\lambda}(k)=\arctan{\sqrt{r^2-1}}$ and $r$~is the $l^1$-distance between the cells of the tableau under consideration containing the elements $k$~and~$k+1$ (the so-called axial distance). If the $2$-interval is not a rhombus but a chain, then in the corresponding one-dimensional real subspace the involution acts as~$\pm 1$ depending on whether the elements $k$~and~$k+1$ lie in the same row or in the same column.

 We see that Young's involution is a deformation of the combinatorial involution, the parameters of this deformation being real numbers~$\alpha_{\lambda}(k)$, and it is these parameters that define a subgroup of the group of unitary operators in the space~$V_{\lambda}$.

 Clearly, the finiteness of this group is related to very special values of the parameters.

\begin{problem}
For what parameters is this group of unitary operators finite? infinite?
 \end{problem}

The remarkable classical result saying that for Young's values of the parameters the group is canonically isomorphic to the symmetric group  is not quite obvious.

It is well known that the free groups generated by more than two involutions are ``wild,'' i.e., the space of their irreducible complex representations is unmanageable and has no natural parametrization. This applies not only to free groups, but also to many infinite groups generated by several involutions. But we define a group together with a fixed finite-dimensional representation (not necessarily irreducible) of this group associated with the graph. Hence it is natural to consider the following question.

\begin{problem}
For a given group of the type under consideration, characterize intrinsically the representations arising in the described construction.
 \end{problem}

  The authors are grateful to M.~A.~Vsemirnov for useful discussions.

 \end{document}